\documentclass[12pt,reqno]{amsart}

\newcommand\version{November 17, 2024}


\usepackage{amsmath,amsfonts,amsthm,amssymb,amsxtra}
\usepackage{color}
\usepackage{bbm} 
\usepackage{hyperref} 



\setlength{\voffset}{-.7truein}
\setlength{\textheight}{8.8truein}
\setlength{\textwidth}{6.05truein}
\setlength{\hoffset}{-.7truein}


\newtheorem{theorem}{Theorem}

\newtheorem{lemma}[theorem]{Lemma}

\theoremstyle{definition}

\theoremstyle{remark}

\newtheorem{remark}[theorem]{Remark}




\newcommand{\C}{\mathbb{C}}

\renewcommand{\epsilon}{\varepsilon}

\newcommand{\N}{\mathbb{N}}

\renewcommand{\phi}{\varphi}
\newcommand{\R}{\mathbb{R}}

\newcommand{\Sph}{\mathbb{S}}

\DeclareMathOperator{\re}{Re}


\begin{document}

\title[Dirichlet and Neumann eigenvalues on Carnot groups --- \version]{Inequalities between Dirichlet and Neumann eigenvalues on Carnot groups}

\author{Rupert L. Frank}
\address[Rupert L. Frank]{Mathe\-matisches Institut, Ludwig-Maximilians Universit\"at M\"unchen, The\-resienstr.~39, 80333 M\"unchen, Germany, and Munich Center for Quantum Science and Technology, Schel\-ling\-str.~4, 80799 M\"unchen, Germany, and Mathematics 253-37, Caltech, Pasa\-de\-na, CA 91125, USA}
\email{r.frank@lmu.de}

\author{Bernard Helffer}
\address[Bernard Helffer]{Laboratoire de Math\'ematiques Jean Leray, CNRS, Nantes Universit\'e, F44000. Nantes, France}
\email{Bernard.Helffer@univ-nantes.fr}

\author[A. Laptev]{Ari Laptev}
\address[Ari Laptev]{Imperial College London, 180 Queen's Gate, London SW7 2AZ, UK, and Sirius Mathematics Center, Sirius University of Science and Technology, 1 Olympic Ave, 354340, Sochi, Russia}
\email{a.laptev@imperial.ac.uk}

\thanks{\copyright\, 2024 by the authors. This paper may be reproduced, in its entirety, for non-commercial purposes.\\
	Partial support through US National Science Foundation grant DMS-1954995 (R.L.F.), as well as through the German Research Foundation through EXC-2111-390814868 and TRR 352-Project-ID 470903074 (R.L.F.) is acknowledged.}

\begin{abstract}
	We show that the $j$-th Dirichlet eigenvalue of the sub-Laplacian on an open set of a Carnot group is greater than the $(j+1)$-st Neumann eigenvalue. This extends earlier results in the Euclidean and Heisenberg case and has a remarkably simple proof.
\end{abstract}

\dedicatory{To Nikolai Nadirashvili, on the occasion of his 70th birthday}

\maketitle

\section{Introduction and main result}

Let $G$ be a Carnot (also known as stratified) group. That is, $G$ is a connected and simply connected Lie group such that its Lie algebra $\mathfrak g$ admits, for some $r\in\N$, a direct sum decomposition
$$
\mathfrak g = V_1 \oplus V_2 \oplus \ldots \oplus V_r \,,
$$
where $V_j = [V_1,V_{j-1}]$ for $j=2,\ldots,r$ and $[V_1,V_r]=\{0\}$. These conditions are equivalent to the existence of a direct sum decomposition $\mathfrak g = V_1 \oplus V_2 \oplus \ldots \oplus V_r$, where $[V_i,V_j] \subset V_{i+j}$ if $i+j\leq r$ and $[V_i,V_j]=\{0\}$ if $i+j> r$, together with the assumption that $V_1$ generates $\mathcal G$ by its brackets; see, e.g., \cite[Subsection 2.3, Ex.~4]{BoLaUg}. Background on Carnot groups can be found, for instance, in \cite{BoLaUg}.

As usual, we also think of $\mathfrak g$ as the algebra of left-invariant vector fields on $G$. We fix a left-invariant inner product on $V_1$ and consider the corresponding sub-Laplacian
$$
- \Delta := - \sum_{\ell=1}^m X_\ell^2 \,,
$$
where $(X_1,\ldots,X_m)$ is an orthonormal basis of $V_1$. The definition of $-\Delta$ does not depend on the choice of the orthonormal basis. Given an open set $\Omega\subset G$ we are interested in the Dirichlet and Neumann realizations, denoted by $-\Delta_\Omega^{\rm D}$ and $-\Delta_\Omega^{\rm N}$, of the sub-Laplacian in $\Omega$. These are selfadjoint, nonnegative, unbounded operators in $L^2(\Omega)$, where the underlying measure is Haar measure. The Dirichlet and Neumann sub-Laplacians are defined using the method of quadratic forms; see Section~\ref{sec:proof} for details. Under the assumption
\begin{equation}
	\label{eq:ass}
	\text{the operator}\ -\Delta_\Omega^{\rm N} \ \text{has compact resolvent}
\end{equation}
the spectra of both operators $-\Delta_\Omega^{\rm D}$ and $-\Delta_\Omega^{\rm N}$ consist of eigenvalues of finite multiplicities, accumulating at infinity only. We enumerate them in nondecreasing order and taking multiplicities into account by $\lambda_j(-\Delta_\Omega^{\rm D})$ and $\lambda_j(-\Delta_\Omega^{\rm N})$ with $j\in\N=\{1,2,3,\ldots\}$. It follows immediately from the variational principle that
$$
\lambda_j(-\Delta_\Omega^{\rm D}) \geq \lambda_j(-\Delta_\Omega^{\rm N})
\qquad\text{for all}\ j\in\N \,.
$$ 
Clearly, when $G=\R$, then $\lambda_j(-\Delta_\Omega^{\rm D}) \geq \lambda_{j+1}(-\Delta_\Omega^{\rm N})$ with equality when $\Omega$ is an interval. In this paper we show that, as soon as $G\neq\R$, then a similar inequality holds and is strict for any domain. Here is the precise statement.

\begin{theorem}
	\label{main}
	Let $G$ be a Carnot group with $\dim V_1\geq 2$ and let $\Omega\subset G$ be a nonempty open set satisfying \eqref{eq:ass}. Then
	$$
	\lambda_j(-\Delta_\Omega^{\rm D}) > \lambda_{j+1}(-\Delta_\Omega^{\rm N})
	\qquad\text{for all}\ j\in\N \,.
	$$
\end{theorem}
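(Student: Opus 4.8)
The plan is to adapt N.~Filonov's variational proof of the Euclidean inequality, using suitable ``plane waves'' on $G$ in place of the exponentials $e^{i\langle\xi,x\rangle}$. Identifying $G$ with $\mathfrak g$ via $\exp$ and $V_1$ with $\R^m$ via the orthonormal basis $(X_1,\dots,X_m)$, the Baker--Campbell--Hausdorff formula shows that the projection $\pi\colon\mathfrak g\to V_1$ is a group homomorphism $G\to(\R^m,+)$, because all higher bracket terms lie in $V_2\oplus\cdots\oplus V_r$. Hence for every $a\in\R^m$ the smooth function $v_a(g):=\exp(i\langle a,\pi(\log g)\rangle)$ satisfies $X_\ell v_a=ia_\ell v_a$, so $-\Delta v_a=|a|^2 v_a$ on all of $G$ and $\sum_{\ell=1}^m|X_\ell v_a|^2\equiv|a|^2$. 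Since \eqref{eq:ass} forces $|\Omega|<\infty$, each $v_a$ lies in $H^1(\Omega)$, the form domain of $-\Delta_\Omega^{\rm N}$. Put $\lambda:=\lambda_j(-\Delta_\Omega^{\rm D})$; an $L^2(G)$ function with vanishing horizontal gradient is constant (as $V_1$ generates $\mathfrak g$), hence $0$, so $\lambda_1(-\Delta_\Omega^{\rm D})>0$ and in particular $\lambda>0$. Fix an orthonormal family $\psi_1,\dots,\psi_j$ of Dirichlet eigenfunctions, $-\Delta_\Omega^{\rm D}\psi_i=\lambda_i\psi_i$ with $\lambda_1\le\cdots\le\lambda_j=\lambda$, regarded as elements of $H^1_0(\Omega)\subset H^1(G)$ (extension by zero).

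Next comes Filonov's computation. Restricting to $a$ with $|a|^2=\lambda$ and writing $u=c_0 v_a+\sum_{i=1}^j c_i\psi_i$, one expands $\int_\Omega\sum_\ell|X_\ell u|^2-\lambda\int_\Omega|u|^2$. Integrating by parts against the divergence-free fields $X_\ell$ and using $-\Delta v_a=\lambda v_a$ together with $\psi_i\in H^1_0(\Omega)$, the mixed $v_a$--$\psi_i$ terms cancel exactly (since $\int_\Omega\sum_\ell(X_\ell v_a)\overline{X_\ell\psi_i}=\lambda\langle v_a,\psi_i\rangle$), the $\psi$--$\psi$ part is diagonal, and the pure $v_a$ term vanishes (since $\sum_\ell|X_\ell v_a|^2=\lambda|v_a|^2$), leaving
$$
\int_\Omega\sum_{\ell=1}^m|X_\ell u|^2-\lambda\int_\Omega|u|^2=\sum_{i=1}^j|c_i|^2(\lambda_i-\lambda)\le 0 .
$$
Here $\dim V_1\ge 2$ enters: the sphere $\{a:|a|^2=\lambda\}$ is infinite, and distinct $v_a$ are linearly independent on $\Omega$ (each factors through $\pi\circ\log$, whose image of the open set $\Omega$ contains an open subset of $\R^m$, on which distinct characters are independent). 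So we may choose $a$ with $v_a\notin\operatorname{span}\{\psi_1,\dots,\psi_j\}+\ker(-\Delta_\Omega^{\rm N}-\lambda)$, a finite-dimensional space; then $L:=\operatorname{span}\{\psi_1,\dots,\psi_j,v_a\}$ has dimension $j+1$, and the variational principle already gives $\lambda_{j+1}(-\Delta_\Omega^{\rm N})\le\lambda$.

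It remains to upgrade this to a strict inequality. Assume $\lambda_{j+1}(-\Delta_\Omega^{\rm N})=\lambda$. Then at most $j$ Neumann eigenvalues lie below $\lambda$, so $\1_{[0,\lambda)}(-\Delta_\Omega^{\rm N})$ has rank $\le j$ and its restriction to $L$ has nonzero kernel; any $0\ne\phi\in L$ with $\1_{[0,\lambda)}(-\Delta_\Omega^{\rm N})\phi=0$ satisfies $\langle(-\Delta_\Omega^{\rm N}-\lambda)\phi,\phi\rangle\ge 0$, and combined with the displayed bound $\le 0$ this forces $\phi\in\ker(-\Delta_\Omega^{\rm N}-\lambda)$. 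As a null vector of the form above, $\phi=\alpha v_a+\sum_{\lambda_i=\lambda}\gamma_i\psi_i$; the choice of $a$ rules out $\alpha\ne 0$, so $\phi$ is a nonzero element of $H^1_0(\Omega)$ that is \emph{both} a Dirichlet and a Neumann eigenfunction with eigenvalue $\lambda$. Extending $\phi$ by zero to $\tilde\phi\in H^1(G)$ and testing the Neumann eigenvalue identity against the restrictions of functions in $C_c^\infty(G)$ yields $-\Delta\tilde\phi=\lambda\tilde\phi$ on all of $G$, i.e.\ a nonzero $L^2(G)$ eigenfunction of the sub-Laplacian. But $-\Delta$ on $L^2(G)$ has no eigenvalue: writing $U_r f:=r^{-Q/2}f\circ\delta_{1/r}$ for the unitaries induced by the dilations $\delta_r$ of $G$ ($Q$ the homogeneous dimension), $U_r\tilde\phi$ is an eigenfunction with eigenvalue $r^{-2}\lambda$ for every $r>0$ (recall $\lambda>0$), producing an uncountable orthogonal family in the separable space $L^2(G)$, a contradiction. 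Hence $\lambda_{j+1}(-\Delta_\Omega^{\rm N})<\lambda=\lambda_j(-\Delta_\Omega^{\rm D})$.

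I expect the delicate point to be this last step, namely arranging --- through the choice of $a$ and the spectral-projection argument --- that the eigenfunction trapped inside $L$ must come from the Dirichlet spectrum, hence extends by zero. This is exactly what lets the proof avoid any unique continuation statement for $-\Delta$ on $G$, which on a general Carnot group would be a serious obstacle; by contrast the integration-by-parts identities of the second step and the absence of $L^2(G)$ eigenvalues need only a little care.
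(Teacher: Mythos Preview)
Your proof is correct and, up to the strictness step, follows the same route as the paper: both construct the plane waves $e^{i\sqrt\lambda\,\omega\cdot x^{(1)}}$ (your $v_a$, built from the homomorphism $\pi\colon G\to V_1$, is exactly this function), use that assumption~\eqref{eq:ass} forces $|\Omega|<\infty$, and run Filonov's computation on $L=\mathrm{span}\{\psi_1,\dots,\psi_j,v_a\}$ to obtain $\lambda_{j+1}^{\rm N}\le\lambda_j^{\rm D}$.

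The genuine difference lies in how strictness is obtained. The paper chooses $U$ outside $\mathcal N$, the span of \emph{all} Neumann eigenfunctions with eigenvalue $\le\lambda_{j+1}^{\rm N}$, and argues that equality in the variational principle would force $L\subset\mathcal N$, contradicting that choice directly. You instead choose $v_a$ outside $\mathrm{span}\{\psi_i\}+\ker(-\Delta_\Omega^{\rm N}-\lambda)$, use a spectral-projection argument to locate inside $L$ a nonzero function that is simultaneously a Dirichlet and a Neumann $\lambda$-eigenfunction, extend it by zero to a nonzero $L^2(G)$-eigenfunction of $-\Delta$, and rule this out via the dilation scaling of the Carnot group. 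Your route is longer but entirely self-contained and makes explicit why no unique continuation theorem is needed; the paper's route is a two-line appeal to the equality case of the variational characterization, with details deferred to its reference. Either way the contradiction is reached without any regularity hypothesis on $\partial\Omega$.
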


In case $G=\R^n$ with its usual addition this theorem is due to Friedlander \cite{Fri} under some additional regularity assumptions on $\Omega$. An elegant alternative proof, which also removes these additional regularity assumptions, was found by Filonov \cite{Fi}.

In case $G$ is the (one- or higher-dimensional) Heisenberg group this theorem is due to two of us \cite{FrLa}. Our earlier proof, which adapted Filonov's strategy, was based on a nontrivial identity for the spectral projection kernel of the Landau Hamiltonian; see also \cite{Fr}.

Our contribution in the present paper is a considerable simplification of the proof for the Heisenberg group and its extension to any Carnot group. No particular identity is needed anymore.

In fact, our method of proof works for a more general class of operators than sub-Laplacians on Carnot groups. We will present this general setting in the final section of this paper.

We emphasize that there are no assumptions on $\Omega$ except for \eqref{eq:ass}. The latter assumption is equivalent to the compactness of the embedding $S^1(\Omega)\subset L^2(\Omega)$. Sufficient conditions for this can be found, for instance, in \cite{GaNh}. For a discussion of issues related to the boundary regularity in the sub-Riemannian case we refer to \cite{Kl,FrHe} and the references therein. Moreover, we recall that Filonov's proof \cite{Fi} on the standard additive group $\R^n$ relies on the unique continuation theorem. In the context of the Heisenberg group we have shown in \cite{FrLa} how to avoid the use of such a theorem. The same applies in the present, more general setting. We note, however, that in our case the vector fields have analytic coefficients and consequently the unique continuation theorem holds \cite{Bo}. Finally, the results in \cite{Fi,FrLa} are proved under the assumption that the open set has finite measure. As we show here, this is a consequence of~\eqref{eq:ass}.

Let us provide some historical context concerning bounds similar to those in Theorem \ref{main}. When $G=\R^n$ with its usual addition such bounds were first proved by Aviles \cite{Av} when $\Omega$ is mean convex, before Payne \cite{Pa} for $n=2$ and Levine and Weinberger \cite{LeWe} for general $n\geq 2$ proved the bound $\lambda_j(-\Delta_\Omega^{\rm D})\geq \lambda_{j+n}(-\Delta_\Omega^{\rm N})$ when $\Omega$ is convex. Whether these inequalities with an index shift by $n$ remain valid for nonconvex $\Omega$ is an open question. (The calculations in the ``counterexample'' in \cite[p.~207]{LeWe} seem to be erroneous, as was pointed out to us by M.~Levitin and I.~Polterovich.) An affirmative answer was given recently for $n=2$ when $\Omega$ is simply connected \cite{Ro}. 

Mazzeo \cite{Ma} proposed to study similar inequalities between Dirichlet and Neumann eigenvalues on Riemannian manifolds and he showed that inequalities as in Theorem~\ref{main} are valid for domains $\Omega$ in a Riemannian symmetric space of noncompact type, for instance in hyperbolic space. They are not valid for all domains on all closed manifolds, however, as shown in \cite{ArMa}. These results have spawned a large literature that we cannot review here and we refer to the above mentioned paper for further references. The paper \cite{Ha} was the first to obtain a partial result in the subelliptic context. It is an \emph{open problem} to prove an inequality between Dirichlet and Neumann eigenvalues with an index shift of two or larger in the non-Euclidean setting, for instance under geometric or topological assumptions on the underlying domain.


\section{Proof}\label{sec:proof}

As before, let $G$ be a Carnot group with a fixed left-invariant inner product on the first stratum $V_1$. For an open set $\Omega\subset G$ the Folland--Stein--Sobolev space $S^1(\Omega)$ consists of all $u\in L^2(\Omega)$ for which the distributions $Xu$ belong to $L^2(\Omega)$ for all $X\in V_1$, normed by $\sqrt{ \| |\nabla u| \|_{L^2}^2 + \|u\|^2_{L^2}}$. Here we have set
$$
|\nabla u| := \sqrt{ \sum_{\ell=1}^m |X_\ell u|^2 } \,,
$$
which is independent of the choice of the orthonormal basis $(X_1,\ldots,X_m)$ of $V_1$. We define $S^1_0(\Omega)$ as the $S^1(\Omega)$-closure of functions in $S^1(\Omega)$ that vanish outside of a compact subset of $\Omega$. We denote by $S^1_{\rm loc}(G)$ the set of functions $u$ on $G$ such that $u|_\Omega\in S^1(\Omega)$ for all open, relatively compact $\Omega\subset G$.

The nonnegative quadratic form $u\mapsto \| |\nabla u| \|_{L^2}^2$ is closed in $L^2(\Omega)$ when considered with either one of the form domains $S^1(\Omega)$ and $S^1_0(\Omega)$. Consequently (see, e.g., \cite[Theorem 1.18]{FrLaWe}), it gives rise to selfadjoint, nonnegative operators $-\Delta_\Omega^{\rm N}$ and $-\Delta_\Omega^{\rm D}$ with form domains $S^1(\Omega)$ and $S^1_0(\Omega)$, respectively. As is well known (see, e.g., \cite[Corollary 1.21]{FrLaWe}), assumption \eqref{eq:ass} is equivalent to the compactness of the embedding $S^1(\Omega)\subset L^2(\Omega)$. We shall need the following fact.

\begin{lemma}\label{compactness}
	Let $\Omega\subset G$ be open and assume that $S^1(\Omega)\subset L^2(\Omega)$ is compact. Then $\Omega$ has finite (Haar) measure.
\end{lemma}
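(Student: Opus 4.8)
The plan is to argue by contradiction: assuming $|\Omega|=\infty$, I will produce a sequence that is bounded in $S^1(\Omega)$ but has no subsequence converging in $L^2(\Omega)$, which contradicts compactness of the embedding. The sequence will consist of normalized restrictions to $\Omega$ of rescaled cut-off functions, which are, morally, truncations of the constant function $1$ — the natural but non-$L^2$ ``ground state'' when $|\Omega|=\infty$. Concretely, fix $\chi\in C_c^\infty(G)$ with $\chi\equiv 1$ on the Carnot--Carathéodory ball $B(e,1)$ (where $e$ is the group identity), $\supp\chi\subset B(e,2)$, and $|\nabla\chi|\le C_0$, and for $r>0$ set $g_r:=\chi\circ\delta_{1/r}$, so that $g_r\equiv 1$ on $B(e,r)$, $\supp g_r\subset B(e,2r)$, and, since $|\nabla(f\circ\delta_\lambda)|=\lambda\,(|\nabla f|)\circ\delta_\lambda$ for $f$ on $G$, $|\nabla g_r|\le C_0/r$. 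Writing $V(r):=|\Omega\cap B(e,r)|$, the restriction of $g_r$ to $\Omega$ satisfies $\|g_r\|_{L^2(\Omega)}^2\ge V(r)$ and $\||\nabla g_r|\|_{L^2(\Omega)}^2\le C_0^2\,r^{-2}\,V(2r)$, so its $L^2(\Omega)$-normalization $u_r$ has $\|u_r\|_{L^2(\Omega)}=1$ and $\||\nabla u_r|\|_{L^2(\Omega)}^2\le C_0^2\,r^{-2}\,V(2r)/V(r)$.

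The step requiring the most care, and the main obstacle, is choosing the scales $r=r_m\to\infty$ so that $(u_{r_m})$ stays bounded in $S^1(\Omega)$, i.e. so that $r_m^{-2}V(2r_m)/V(r_m)$ remains bounded. A fixed dyadic sequence $r_m=2^m$ need not work: on a Carnot group balls have polynomial volume growth of order $Q=\sum_j j\dim V_j$ — indeed $\delta_\lambda B(e,1)=B(e,\lambda)$ and $\delta_\lambda$ scales Haar measure by $\lambda^Q$, whence $|B(e,r)|=c_Q r^Q$ and $V(r)\le c_Q r^Q$ — but if $Q>2$ it is a priori conceivable that $V(2r)/V(r)$ exceeds $r^2$ at every scale. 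Nevertheless this cannot happen at every dyadic scale: since $V$ is nondecreasing, $V(r)\le c_Q r^Q$, and $V(r)\to|\Omega|=\infty$, there must be infinitely many $j\in\N$ with $V(2^{j+1})\le 2^{Q+1}V(2^{j})$ — otherwise $V(2^{j+1})>2^{Q+1}V(2^{j})$ for all $j\ge J$, hence $V(2^{J+k})>2^{(Q+1)k}V(2^{J})$ for all $k\ge 1$, which for large $k$ violates $V(2^{J+k})\le c_Q 2^{(J+k)Q}$. Fixing such a strictly increasing sequence $j_1<j_2<\cdots$ (with $V(2^{j_1})>0$) and setting $r_m:=2^{j_m}$, one obtains $\||\nabla u_{r_m}|\|_{L^2(\Omega)}^2\le C_0^2\,2^{Q+1}\,2^{-2j_m}\to 0$, so $(u_{r_m})$ is indeed bounded in $S^1(\Omega)$, with $S^1$-norm tending to $1$.

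It then remains to read off the contradiction. By compactness of $S^1(\Omega)\subset L^2(\Omega)$, some subsequence of $(u_{r_m})$ converges in $L^2(\Omega)$ to a limit $v$ with $\|v\|_{L^2(\Omega)}=1$. On the other hand, fixing any $M>0$, for all $m$ with $2^{j_m}>M$ the function $g_{r_m}$ is identically $1$ on $B(e,M)$, so $\|u_{r_m}\|_{L^2(\Omega\cap B(e,M))}^2=V(M)/\|g_{r_m}\|_{L^2(\Omega)}^2\le V(M)/V(2^{j_m})\to 0$; therefore $v=0$ a.e. on $\Omega\cap B(e,M)$ for every $M$, i.e. $v=0$, contradicting $\|v\|_{L^2(\Omega)}=1$. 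This completes the argument. (An alternative, more hands-on construction would place fixed bumps on a sequence of pairwise disjoint unit balls contained in $\Omega$; but such balls need not exist when $\Omega$ is, say, a thin cusp of infinite measure, whereas the argument above uses no regularity of $\Omega$ whatsoever and relies only on the polynomial volume growth of the group.)
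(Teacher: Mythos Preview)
Your proof is correct and follows the same overall strategy as the paper's: argue by contradiction, construct truncations of the constant function~$1$ at a well-chosen sequence of scales (selected by the same pigeonhole argument exploiting the polynomial volume growth $|B(e,r)|=c_Q r^Q$), and show that the resulting $S^1$-bounded, $L^2$-normalized sequence escapes to infinity, contradicting compactness.

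The one genuine difference is in how the cutoffs are built. The paper (following \cite[Lemma~2.42]{FrLaWe}) uses functions of the Carnot--Carath\'eodory distance $d(\cdot,0)$ and therefore has to invoke two external results: the Sobolev regularity of Lipschitz functions from \cite{GaNh2} to ensure the trial functions lie in $S^1(\Omega)$, and the eikonal equation $|\nabla d(\cdot,0)|=1$ a.e.\ from \cite{MoSC} to control their horizontal gradient. Your choice $g_r=\chi\circ\delta_{1/r}$ with a smooth $\chi$ sidesteps both: smoothness of $g_r$ is immediate, and the gradient bound follows from the degree-one homogeneity $X_\ell(f\circ\delta_\lambda)=\lambda\,(X_\ell f)\circ\delta_\lambda$ of the horizontal vector fields. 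This makes your version somewhat more self-contained, at the modest cost of introducing the dilations explicitly.
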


\begin{proof}
	The proof is essentially the same as that of \cite[Lemma 2.42]{FrLaWe}, which concerns the case where $G=\R^n$ with its usual addition. To adapt this proof to the setting of general Carnot groups we replace the Euclidean distance $|x|$ from the origin by the Carnot--Carath\'eodory distance $d(x,0)$ from the unit element $0$ of $G$. For background on this distance we refer to \cite[Subsection~5.2]{BoLaUg}. The first step of the proof of \cite[Lemma 2.42]{FrLaWe} depends only on the homogeneity of the distance and the fact that the volume of a ball grows polynomially with the distance. Both properties are valid in present setting. The second step of the proof depends on the construction of a family of trial functions. The fact that these functions belong to $S^1(\Omega)$ follows from \cite[Theorem~1.3]{GaNh2}. In view of the eikonal equation $|\nabla d(\cdot,0)|=1$ almost everywhere \cite[Theorem 3.1]{MoSC}, we can argue in the same way as in \cite[Lemma 2.42]{FrLaWe}.
\end{proof}

We now come to the proof of Theorem \ref{main}, for which we adapt Filonov's method \cite{Fi}. The crucial ingredient is the following result.

\begin{lemma}\label{extrafunction}
	Let $G$ be a Carnot group with $\dim V_1\geq 2$ and let $\lambda>0$. Then there are infinitely many linearly independent functions $U\in S^1_{\rm loc}(G)\cap L^\infty(G)$ such that $-\Delta U = \lambda U$ and $|\nabla U|^2 = \lambda |U|^2$ in $G$.
\end{lemma}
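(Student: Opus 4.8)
The plan is to construct the functions $U$ very explicitly as exponentials (or linear combinations of exponentials) of linear coordinates on the first stratum $V_1$. Fix the orthonormal basis $(X_1,\ldots,X_m)$ of $V_1$ with $m=\dim V_1\geq 2$, and let $(x_1,\ldots,x_m)$ denote the associated exponential coordinates of the first layer, so that $X_\ell$ acts on functions depending only on $x=(x_1,\ldots,x_m)$ exactly as $\partial/\partial x_\ell$. (This is the key structural fact about Carnot groups: in exponential coordinates adapted to the stratification, the first-layer vector fields differ from the Euclidean partials $\partial_{x_\ell}$ only by terms involving derivatives in the higher strata, so they agree with $\partial_{x_\ell}$ on functions that are independent of the higher-stratum coordinates.) Hence for any $\xi\in\C^m$ the function $U_\xi(g):=\exp(\xi\cdot x(g))$ satisfies $X_\ell U_\xi=\xi_\ell U_\xi$ and therefore $-\Delta U_\xi = -\bigl(\sum_\ell \xi_\ell^2\bigr)U_\xi$ and $|\nabla U_\xi|^2=\bigl(\sum_\ell|\xi_\ell|^2\bigr)|U_\xi|^2$.

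Next I impose the two requirements. To get $-\Delta U_\xi=\lambda U_\xi$ with $\lambda>0$ we need $\sum_\ell\xi_\ell^2=-\lambda$, i.e.\ $\xi=\eta+i\zeta$ with $\eta,\zeta\in\R^m$, $|\eta|^2-|\zeta|^2=-\lambda$ and $\eta\cdot\zeta=0$. To get the pointwise identity $|\nabla U_\xi|^2=\lambda|U_\xi|^2$ we need in addition $|\eta|^2+|\zeta|^2=\lambda$, which together with the previous relations forces $\eta=0$ and $|\zeta|^2=\lambda$. So the honest candidates are $U_\zeta(g)=e^{i\zeta\cdot x(g)}$ with $\zeta\in\R^m$, $|\zeta|=\sqrt\lambda$; these are manifestly bounded, lie in $S^1_{\rm loc}(G)\cap L^\infty(G)$ (they are smooth and, on relatively compact sets, together with their $X_\ell$-derivatives lie in $L^2$), and satisfy both equations. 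Since $m\geq2$, the sphere $\{|\zeta|=\sqrt\lambda\}\subset\R^m$ is infinite, and the corresponding characters $e^{i\zeta\cdot x}$ with distinct $\zeta$ are linearly independent (e.g.\ by looking at their behaviour along the one-parameter subgroups generated by elements of $V_1$, or by a Vandermonde/almost-periodicity argument). This already produces infinitely many linearly independent $U$ with all required properties, so in fact the conclusion holds with a completely elementary construction.

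The one point that genuinely needs care—and the step I expect to be the main technical obstacle—is the claim that $X_\ell$ reduces to $\partial_{x_\ell}$ on functions of the first-layer coordinates alone. This requires choosing coordinates correctly: one takes exponential coordinates of the second kind, or exponential coordinates adapted to the grading, and uses that the structure constants respect $[V_i,V_j]\subset V_{i+j}$, so that when $X_\ell$ is written in these coordinates the coefficient of $\partial_{x_\ell}$ is $1$ and all other coefficients are polynomials that depend only on coordinates of strictly lower weight than the slot they sit in; in particular no term of the form (polynomial)$\cdot\partial_{x_k}$ with $x_k$ a first-layer coordinate survives, and every remaining term differentiates only higher-layer coordinates. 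Applied to a function of $x_1,\dots,x_m$ only, all those extra terms annihilate it. I would cite the standard normal-form results for Carnot groups (e.g.\ in \cite{BoLaUg}) for this and not reprove it. Everything else—closedness under left translation is irrelevant here, membership in $S^1_{\rm loc}$, boundedness, linear independence—is routine once this normal form is in hand.
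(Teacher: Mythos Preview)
Your proposal is correct and essentially identical to the paper's own proof: the paper also constructs $U(x)=e^{i\sqrt\lambda\,\omega\cdot x^{(1)}}$ with $\omega\in\Sph^{m-1}$, after noting (via the explicit coordinate form of a Carnot group, cf.\ \cite{BoLaUg}) that $X_\ell$ acts as $\partial/\partial x^{(1)}_\ell$ on functions depending only on the first-layer variable $x^{(1)}$. Your additional step of starting from general $\xi\in\C^m$ and deducing $\xi=i\zeta$ is a harmless detour to the same destination.
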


The function $U$ will be smooth, but for the proof of Theorem \ref{main} it suffices that the equation $-\Delta U = \lambda U$ holds in the sense of distributions.

\begin{proof}
	For the proof of this lemma we choose a particular form of $G$. Indeed, having fixed the basis $(X_1,\ldots,X_m)$ of $V_1$, we know that $G$ is (Lie-group isomorphic to)
	$$
	\R^{m_1}\times\cdots\times \R^{m_r}
	$$
	with $m_\rho=\dim V_\rho$ for $\rho=1,\ldots,r$. Writing $x=(x^{(1)},\ldots,x^{(r)})$, $y=(y^{(1)},\ldots,y^{(r)})$ with $x^{(\rho)},y^{(\rho)}\in\R^{m_\rho}$ for $\rho=1,\ldots,r$, the group law is given by
	$$
	(x\circ y)^{(\rho)} = x^{(\rho)} + y^{(\rho)} + Q^{(\rho)}(x^{(1)},\ldots,x^{(\rho-1)},y^{(1)},\ldots,y^{(\rho-1)}) \,,
	$$
	where $Q^{(1)}=0$ and where, for $\rho=2,\ldots,r$, $Q^{(\rho)}$ is a function taking values in $\R^{m_\rho}$. (In fact, each component of $Q^{(\rho)}$ is a polynomial of a certain degree, but this is not relevant for us.) The vector fields $X_1,\ldots,X_m$ (note $m=m_1$) are given by
	\begin{equation}\label{eq:new}
	X_\ell = \frac{\partial}{\partial x^{(1)}_\ell} + \sum_{\rho=2}^r \sum_{k=1}^{m_\rho} \frac{\partial Q^{(\rho)}_\alpha}{\partial y_\ell^{(1)}}\Big|_{y=0} \frac{\partial}{\partial x^{(\rho)}_k}
	\end{equation}
	and the Lebesgue measure on $\R^N$ is bi-invariant and, consequently, a Haar measure. These facts are well known and are proved, for instance, in \cite[Subsection 2.2]{BoLaUg}; see also \cite{FoSt,HeNo}.
	
	What will be important in what follows is that $X_\ell$ acts as $\frac{\partial}{\partial x^{(1)}_\ell}$ on functions that only depend on the variable $x^{(1)}$. Consequently, for any fixed $\omega\in\Sph^{m-1}$ the function	
	$$
	U(x) = e^{i\sqrt{\lambda}\, \omega\cdot x^{(1)}}
	$$
	satisfies the claimed properties. For different $\omega$'s these functions are linearly independent and, since $m=\dim V_1\geq 2$, there are infinitely many such functions. This proves the assertion of the lemma.
\end{proof}

\begin{remark}
	In the case of the Heisenberg group $\mathbb H_n$ with variables $(x,y,t)\in\R^{n}\times\R^n\times\R$ and vector fields 
	$$
	X_\ell=\frac{\partial}{\partial x_\ell} + \frac12 y_\ell \frac{\partial}{\partial t} \,,
	\qquad
	Y_\ell=\frac{\partial}{\partial y_\ell} - \frac12 x_\ell \frac{\partial}{\partial t} \,,
	\qquad
	\text{for}\ \ell=1,\ldots,2n\,,
	$$
	spanning $V_1$ the group is already in the appropriate form and we can take $$. U(x,y,t) = e^{i\sqrt\lambda\, \omega\cdot(x,y)}$$with $\omega\in\Sph^{2n-1}$. This is much simpler than the choice in \cite{FrLa}.
\end{remark}

With Lemma \ref{extrafunction} at hand we can prove our main result in the same way as in \cite{FrLa}. We include the details for the sake of completeness.

\begin{proof}[Proof of Theorem \ref{main}]
	Let $\Omega\subset G$ be a nonempty open set satisfying \eqref{eq:ass}. We fix $j\in\N$ and denote by $\mathcal N$ the space spanned by all Neumann eigenfunctions corresponding to eigenvalues $\leq\lambda_{j+1}^{\rm N}$. Here and in what follows we denote $\lambda_k^\#:=\lambda_k(-\Delta_\Omega^\#)$ for $\#\in\{\rm D, N\}$. (We emphasize the dimension of $\mathcal N$ might exceed $j+1$ if $\lambda_{j+1}^{\rm N}$ is degenerate.) We choose orthonormal eigenfunctions $\phi_1^{\rm D},\ldots,\phi_j^{\rm D}$ of $-\Delta_\Omega^{\rm D}$ corresponding to the eigenvalues $\lambda_1^{\rm D},\ldots,\lambda_j^{\rm D}$. According to Lemma~\ref{extrafunction} there is a function $U\in S^1(\Omega)$ that does not lie in the space ${\rm span}\,\{\phi_1^{\rm D},\ldots,\phi_j^{\rm D}\}\cup\mathcal N$ and that satisfies
	$$
	-\Delta U = \lambda_j^{\rm D} U
	\quad\text{in}\ \Omega
	\qquad\text{and}\qquad
	|\nabla U|^2 = \lambda_j^{\rm D} |U|^2
	\quad\text{in}\ \Omega \,.
	$$
	We emphasize that the fact that $U\in S^1(\Omega)$ follows with the help of Lemma \ref{compactness}. Indeed, the facts that $U\in L^\infty(\Omega)$ and that, according to this lemma, $|\Omega|<\infty$ imply that $U\in L^2(\Omega)$. Then the identity for $|\nabla U|$ together with the fact that $U\in S^1_{\rm loc}(G)$ imply that $U\in S^1(\Omega)$.
	
	We shall show that
	\begin{equation}\label{eq:filonovproof}
		\| |\nabla u| \|_{L^2}^2 \leq \lambda_j^{\rm D} \| u \|^2_{L^2}
		\qquad\text{for all}\ u \in {\rm span}\,\{\phi_1^{\rm D},\ldots,\phi_j^{\rm D},U\} \,.
	\end{equation}
	Indeed, we write such a $u$ as
	$$
	u = \sum_{k=1}^j \alpha_k \phi_k^{\rm D} + \alpha_{j+1} U
	$$
	with $\alpha_1,\ldots,\alpha_{j+1}\in\C$ and expand
	$$
	\| |\nabla u| \|_{L^2}^2 = \sum_{1\leq k,k'\leq j} \overline{\alpha_k}\alpha_{k'} \mathcal E[\phi_k^{\rm D},\phi_{k'}^{\rm D}] + 2 \re \sum_{k=1}^j \overline{\alpha_k} \mathcal E[\phi_k^{\rm D},U] + \mathcal E[U,U] \,,
	$$
	where we have set
	$$
	\mathcal E[v,w] := \sum_{\ell=1}^m \int_\Omega \overline{(X_\ell v)} (X_\ell w) \,dx
	$$
	with integration with respect to Haar measure. The eigenvalue equation for $\phi_k^{\rm D}$ and the orthonormality of these functions implies that
	$$
	\mathcal E[\phi_k^{\rm D},\phi_{k'}^{\rm D}] = \lambda_k^{\rm D} \,\delta_{k,k'} \,.
	$$
	Next, the equation for $U$ and the fact that $\phi_k^{\rm D}\in S^1_0(\Omega)$ implies that
	$$
	\mathcal E[\phi_k^{\rm D},U] = \lambda_j^{\rm D} \int_\Omega \overline{\phi_k^{\rm D}} U\,dx \,.
	$$
	Finally, by the choice of $U$,
	$$
	\mathcal E[U,U] = \lambda_j^{\rm D} \int_\Omega |U|^2\,dx \,.
	$$
	Thus, we have
	\begin{align*}
		\| |\nabla u| \|_{L^2}^2 & = \sum_{1\leq k\leq j} \lambda_k^{\rm D} |\alpha_{k}|^2 + 2 \lambda_j^{\rm D} \re \sum_{k=1}^j \overline{\alpha_k} \int_\Omega \overline{\phi_k^{\rm D}} U\,dx +\lambda_j^{\rm D} \int_\Omega |U|^2\,dx \\
		& \leq \lambda_j^{\rm D} \left( \sum_{1\leq k\leq j} |\alpha_{k}|^2 + 2 \re \sum_{k=1}^j \overline{\alpha_k} \int_\Omega \overline{\phi_k^{\rm D}} U\,dx + \int_\Omega |U|^2\,dx \right)
		& = \lambda_j^{\rm D} \| u \|_{L^2}^2 \,,
	\end{align*}
	which proves \eqref{eq:filonovproof}.
	
	Note that since $U$ does not lie in ${\rm span}\,\{\phi_1^{\rm D},\ldots,\phi_j^{\rm D}\}$, we have
	$$
	\dim {\rm span}\,\{\phi_1^{\rm D},\ldots,\phi_j^{\rm D},U\} = j+1 \,.
	$$
	By the variational principle (see, e.g., \cite[Theorem 1.25]{FrLaWe}), \eqref{eq:filonovproof} implies $\lambda_{j+1}^{\rm N} \leq \lambda_j^{\rm D}$. Moreover, if we had equality in this inequality, then it would follow that
	$$
	{\rm span}\,\{\phi_1^{\rm D},\ldots,\phi_j^{\rm D},U\} \subset\mathcal N \,,
	$$	
	which would contradict the fact that $U$ does not lie in $\mathcal N$. Thus we have shown that the inequality is strict and the proof of the theorem is complete.
\end{proof}


\section{An extension}

In this section we shall show that an analogue of Theorem \ref{main} remains valid in a more general setting. As a motivating example we consider the 
Baouendi--Grushin operator 
$$
- \frac{\partial^2}{\partial x^2} - \frac{\partial^2}{\partial y^2} -(x^2+y^2)\frac{\partial^2}{\partial t^2}
$$ 
or the Baouendi--Goulaouic operator
$$
- \frac{\partial^2}{\partial x^2} - \frac{\partial^2}{\partial y^2} -x^2 \frac{\partial^2}{\partial t^2} \,,
$$ 
both acting in $\R^3$ with coordinates denoted by $(x,y,t)$. As before we can consider the Dirichlet and Neumann restrictions to an open set $\Omega\subset\R^3$ and, by considering the trial function $U(x,y,t) = e^{i\sqrt\lambda \omega\cdot(x,y)}$ with $\omega\in\Sph^1$ we can argue as before and find that the $j$-th Dirichlet eigenvalue is strictly larger than the $(j+1)$-st Neumann eigenvalue.

Here is an abstract way to generalize these examples. We follow the presentation of G.~Folland \cite{Fo}, but see also the book of one of the authors with J.~Nourrigat \cite{HeNo1}.

We consider a real vector space $W$ of finite dimension with a direct sum decomposition
$$
W = W_1 \oplus W_2 \oplus \ldots \oplus W_r \,.
$$
Defining dilations $h_t:W\to W$ for $t>0$ by
$$
h_t(\sum_{j=1}^r w_j) = \sum_{j=1}^r t^j w_j
\qquad\text{if}\ w_j\in W_j \,,\ j=1,\ldots,r \,,
$$
we say that a differential operator $P$ on $W$ with smooth coefficients is homogeneous of degree $m$ if
$$
P(f\circ h_t) = t^m (Pf)\circ h_t
\qquad\text{for all}\ f\in C^\infty(W) \,,\ t>0 \,.
$$
Let $X_1,\ldots,X_p$ be smooth, real vector fields on $W$ that are homogeneous of degree 1 and satisfy H\"ormander's condition (meaning the $X_j$'s and their commutators span the tangent space to $W$ at the origin). We are interested in the operator
$$
\mathcal L = \sum_{\ell=1}^p X_\ell^2 \,.
$$

We denote by $\mathfrak g$ the Lie algebra of vector fields generated by $X_1,\ldots,X_p$. This is a stratified algebra with the stratification determined by the homogeneity. We denote
$$
\mathfrak h := \{ X \in\mathfrak g :\ X|_0 = 0 \} \,,
$$
which is a subalgebra of $\mathfrak g$. Our \emph{assumption} is that the codimension of $\{ X\in \mathfrak h :\ X \ \text{is homogeneous of degree 1}\}$ in $\{ X\in \mathfrak g:\ X \ \text{is homogeneous of degree 1} \}$ is at least two.

For an open set $\Omega\subset W$ let $\mathcal L_\Omega^{\rm D}$ and $\mathcal L_\Omega^{\rm N}$ denote the Dirichlet and Neumann realizations of $\mathcal L = \sum_{\ell=1}^p X_\ell^2$ in $L^2(\Omega)$, respectively. The space $L^2(\Omega)$ is defined with respect to Lebesgue measure. Assuming that $\mathcal L_\Omega^{\rm N}$ has compact resolvent, we introduce the eigenvalues $\lambda_j(\mathcal L_\Omega^{\rm D})$ and $\lambda_j(\mathcal L_\Omega^{\rm N})$, $j\in\N$, similarly as before.

\begin{theorem}
	\label{main2}
	Under the above assumptions, $\lambda_j(\mathcal L_\Omega^{\rm D})>\lambda_{j+1}(\mathcal L_\Omega^{\rm N})$ for all $j\in\N$.
\end{theorem}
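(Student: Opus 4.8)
The plan is to mimic exactly the proof of Theorem~\ref{main}, the only difference being the source of the special trial functions. The structure has three ingredients: (i) a finiteness-of-measure lemma analogous to Lemma~\ref{compactness}; (ii) an analogue of Lemma~\ref{extrafunction} producing infinitely many bounded, locally-$S^1$ functions $U$ with $\mathcal L U = -\lambda U$ and $|\nabla_X U|^2 = \lambda|U|^2$, where $|\nabla_X U|^2 := \sum_{\ell=1}^p |X_\ell U|^2$; and (iii) the Filonov-type variational argument, which is verbatim the one already given for Theorem~\ref{main} once (i) and (ii) are in place. So the real content is (ii), and to a lesser extent (i).

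For step (ii) I would exploit the stratified structure of $\mathfrak g$ together with the homogeneity assumption. Following Folland's theory, the vector fields $X_1,\ldots,X_p$ generate a stratified Lie algebra $\mathfrak g$, and the degree-one part of $\mathfrak g$ maps onto the degree-one vector fields at the origin; the subalgebra $\mathfrak h$ of vector fields vanishing at $0$ has, by hypothesis, codimension at least two among the degree-one elements. Concretely, this means that among the degree-one vector fields $X_1,\ldots,X_p$ (and their linear combinations), evaluated at the origin, one sees at least a two-dimensional space of directions in $W_1$; after a linear change of coordinates in $W_1$ I can assume $X_1,\ldots,X_q$ ($q\geq 2$) have $X_\ell|_0 = \partial/\partial w_\ell^{(1)}$ with linearly independent principal parts, and — crucially — that each $X_\ell$ for $\ell=1,\ldots,q$ acts as $\partial/\partial w_\ell^{(1)}$ on functions depending only on a suitable two-dimensional coordinate subspace of $W_1$. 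The cleanest route is: pass to the nilpotent approximation / exponential coordinates in which each degree-one $X_\ell$ is $\partial_{w_\ell^{(1)}} + (\text{terms annihilating functions of } w^{(1)}\text{ alone, or at least of the two privileged coordinates})$, exactly as in \eqref{eq:new}. Then for $\omega\in\Sph^1$ in that two-dimensional subspace, $U(w) = e^{i\sqrt\lambda\,\omega\cdot w'}$ (with $w'$ the two privileged $W_1$-coordinates) satisfies $X_\ell U = i\sqrt\lambda\,\omega_\ell U$ for the relevant $\ell$ and $X_\ell U = 0$ for the others, hence $\mathcal L U = -\lambda U$ and $\sum_\ell |X_\ell U|^2 = \lambda|U|^2$; these are bounded, smooth, lie in $S^1_{\rm loc}(W)$, and are linearly independent over the infinitely many choices of $\omega$. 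This is the step I expect to be the main obstacle: verifying that the homogeneity-plus-H\"ormander hypothesis really delivers two $W_1$-coordinates on which enough of the $X_\ell$ act as pure partial derivatives, and that the remaining $X_\ell$ kill functions of those coordinates. The Baouendi--Grushin and Baouendi--Goulaouic examples, where the relevant fields are literally $\partial_x$ and $\partial_y$ and the third ``field direction'' involves $\partial_t$ multiplied by functions vanishing appropriately, show why the codimension-two assumption is exactly what is needed: it guarantees two genuine ``flat'' directions.

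For step (i), the finiteness of $|\Omega|$ when $S^1(\Omega)\hookrightarrow L^2(\Omega)$ is compact, I would repeat the argument of Lemma~\ref{compactness}: use a homogeneous (pseudo-)distance associated with the dilations $h_t$ on $W$, note that balls grow polynomially in volume, and construct the same family of trial functions built from the distance, the only new point being to check these trial functions lie in $S^1(\Omega)$ and that the relevant eikonal-type bound $|\nabla_X \rho|\leq C$ holds for the control distance $\rho$ — which follows from standard sub-Riemannian estimates under H\"ormander's condition. Once (i) and (ii) are available, step (iii) is identical to the displayed computation in the proof of Theorem~\ref{main}: pick Dirichlet eigenfunctions $\phi_1^{\rm D},\ldots,\phi_j^{\rm D}$ for $\lambda_1^{\rm D},\ldots,\lambda_j^{\rm D}=:\lambda$, choose $U$ from the infinite family lying outside ${\rm span}\{\phi_1^{\rm D},\ldots,\phi_j^{\rm D}\}\cup\mathcal N$, use $|\Omega|<\infty$ and $U\in L^\infty$ to get $U\in L^2(\Omega)$, then the pointwise identity $|\nabla_X U|^2=\lambda|U|^2$ plus $U\in S^1_{\rm loc}$ to get $U\in S^1(\Omega)$, establish $\||\nabla_X u|\|_{L^2}^2\leq\lambda\|u\|_{L^2}^2$ on the $(j+1)$-dimensional span by the same expansion (the cross terms reduce to $\lambda\int\overline{\phi_k^{\rm D}}U$ by integration by parts since $\phi_k^{\rm D}\in S^1_0(\Omega)$), and conclude $\lambda_{j+1}^{\rm N}\leq\lambda_j^{\rm D}$ by the variational principle, with strictness because equality would force $U\in\mathcal N$, a contradiction. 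No new idea is needed in (iii); I would simply write ``the proof now proceeds exactly as in the proof of Theorem~\ref{main}'' and refer back.
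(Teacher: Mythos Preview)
Your overall plan is correct and matches the paper's: produce infinitely many bounded $U$ with $\mathcal L U=-\lambda U$ and $\sum_\ell|X_\ell U|^2=\lambda|U|^2$, then run Filonov's argument verbatim. The difference is in how step (ii) is executed. The paper does \emph{not} work directly in $W$; it passes to the Carnot group $G$ attached to $\mathfrak g$, invokes Folland's lifting to identify $\mathcal L$ (up to a Jacobian-one diffeomorphism $\theta$) with the induced-representation operator $\pi_{(0,\mathfrak h)}(-\Delta)$ on $\R^k$, and then quotes the explicit normal form for $\pi_{(0,\mathfrak h)}(\widetilde X_\ell)$ from Helffer--Nourrigat to see that on functions of two privileged coordinates $(t_1,t_2)$ the operator reduces to $-\partial_{t_1}^2-\partial_{t_2}^2$.

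Your direct route is more elementary and in fact works without the change of coordinates you worry about. Degree-one homogeneity forces each $X_\ell$ to have the shape
\[
X_\ell=\sum_{j=1}^{m_1} a_{\ell j}\,\partial_{w_j^{(1)}}+\sum_{\rho\ge 2}\sum_k c^{(\rho)}_{\ell k}(w)\,\partial_{w_k^{(\rho)}},
\]
with $a_{\ell j}$ \emph{constant} and each $c^{(\rho)}_{\ell k}$ homogeneous of positive degree; hence every $X_\ell$ acts on a function of $w^{(1)}$ alone as the constant vector field $\sum_j a_{\ell j}\partial_{w_j^{(1)}}$. On such functions $\mathcal L$ is the constant-coefficient operator with symbol $|A\xi|^2$, where $A=(a_{\ell j})$. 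H\"ormander's condition at $0$ forces $\mathrm{rank}\,A=\dim W_1$ (iterated brackets contribute only to $W_\rho$-directions with $\rho\ge 2$), and the codimension hypothesis is then exactly $\dim W_1\ge 2$. Thus the ellipsoid $\{\xi:|A\xi|^2=\lambda\}$ is infinite, and $U_\xi(w)=e^{i\xi\cdot w^{(1)}}$ satisfies $X_\ell U_\xi=i(A\xi)_\ell U_\xi$, giving both required identities. So the step you flagged as ``the main obstacle'' is immediate once one writes down the homogeneity constraint on the coefficients; no basis change in $V_1$ (orthogonal or otherwise) is needed. Your approach trades the Folland/Helffer--Nourrigat machinery for this direct observation; the paper's route, in exchange, makes the role of $\mathfrak h$ and the quotient structure more transparent. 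Step (i) and step (iii) are as you say; the paper itself simply writes ``argue as before'' for both.
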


Before giving the proof of this theorem, let us show that the two examples discussed at the beginning of this section fall in our general framework. Indeed, in both examples we consider the variables $x$ and $y$ to be of homogeneity one and the variable $t$ of homogeneity two. The Lie algebra $\mathfrak g$ is spanned by $\partial/\partial x$, $\partial/\partial y$, $x\partial/\partial t$ and $y\partial/\partial t$ in the first example and by the first three in the second example. The subspace of vector fields in $\mathfrak h$ that are homogeneous of degree 1 is spanned by $x\partial/\partial t$ and $y\partial/\partial t$ in the first example and by $x\partial/\partial t$ in the second example. Thus, in both cases this space is of codimension 2 in the subspace of vector field in $\mathfrak g$ that are homogeneous of degree~1.

\begin{proof}
	\emph{Step 1.} Let $G$ be the Carnot group corresponding to $\mathfrak g$ and let $\widetilde X_1,\ldots,\widetilde X_p$ be the left-invariant vector fields on $G$ corresponding to the vector fields $X_1,\ldots,X_p$ on $W$. We consider the sub-Laplacian $-\Delta = - \sum_{\ell=1}^p \widetilde X_\ell^2$ on $G$.
	
	It follows from the H\"ormander condition that the codimension of $\mathfrak h$ in $\mathfrak g$ is equal to $\dim W=:k$. Since $\mathfrak h$ is a subalgebra of $\mathfrak g$, there is an operator
	$$
	\pi_{(0,\mathfrak h)}(-\Delta)
	$$
	on $\R^k$. For the definition we refer to \cite[Subsection 1.3]{HeNo1}.
	
	Folland has shown that there is a diffeomorphism $\theta:W\to\R^k$ with Jacobian equal to 1 such that $f\in\mathcal S(\R^k)$ if and only if $f\circ\theta\in\mathcal S(W)$, and in this case
	$$
	\pi_{(0,\mathfrak h)}(-\Delta)[f] = ( \mathcal L[f\circ\theta])\circ\theta^{-1} \,.
	$$
	(This is essentially \cite[Proposition 1.4.1]{HeNo1}, except that they choose $\mathfrak g$ as a free nilpotent group, while we choose it here in a minimal fashion. This difference is discussed in Folland's paper \cite{Fo}.)
	
	The upshot of this discussion is that instead of the operator $\mathcal L$ in $W$ we can consider the operator $\pi_{(0,\mathfrak h)}(-\Delta)$ in $\R^k$.
	
	\bigskip
	
	\emph{Step 2.} We denote by $V_1$ and $\mathfrak h_1$ the subspaces of $\mathfrak g$ and $\mathfrak h$ of elements that are homogeneous of degree 1.  We can now choose coordinates in $\mathfrak g$ as in  \cite[Equation (1.2.6)]{HeNo1} and a new basis for $V_1$ corresponding to a direct sum decomposition $V_1= E_1 \oplus  \mathfrak h_1$. We then express $\pi_{(0,\mathfrak h)}(\widetilde X_\ell)$ as in \cite[Equation (1.3.2)]{HeNo1} with $\sigma'(t,a)$ given in \cite[Equation (1.3.4)]{HeNo1}. We are interested in the computation of $\sigma'(t,a)$ for $a \in V_1$.
	
	We assume that we are given the Euclidean structure on $V_1$. Let $k_1$ denote the codimension of $\mathfrak h_1$ in $V_1$.
	We assume that $\widetilde X_\ell$ ($\ell=1,\ldots, k_1$) is a basis of $E_1$ and $\widetilde X_\ell$ ($\ell=k_1+1,\ldots, p$) is a basis of $\mathfrak h_1$. Following the construction in \cite{HeNo1}, we have
	$$
	\pi_{(0,\mathfrak h)} (\widetilde X_\ell) = \frac{\partial}{\partial t_\ell} + \sum_{j \geq k_1+1} P_{\ell,j} (t) \frac{\partial}{\partial t_j} \,,\qquad\forall \ell=1,\ldots, k_1\,,
	$$
	and 
	$$
	\pi_{(0,\mathfrak h)} (\widetilde X_\ell)= \sum_{j\geq k_1+1} P_{\ell,j} (t) \frac{\partial}{\partial t_j} \,,\qquad \forall \ell=k_1+1,\ldots,p\,,
	$$
	where the $P_{\ell,j}$ have the appropriate homogeneity (which in particular implies that they vanish at $0$).
	
	Our assumption is that $k_1\geq 2$. In particular, when restricted to functions $f$ depending only on $(t_1,t_2)$ we have 
	$$
	\pi_{(0,\mathfrak h)} (-\Delta) f=\left(-\frac{\partial^2}{\partial t_1^2} - \frac{\partial^2}{\partial t_2^2}\right)f\,.
	$$
	Once we have this, we can argue as before.
\end{proof}

The framework described in this section encompasses many more examples than the two that we have already mentioned. Obvious generalizations are
$$
-\Delta_x - |x|^{2a} \Delta_y
$$
in $\R^n\times\R^m$ with coordinates $(x,y)$ with $a\in\N$ and $n\geq 2$. The latter condition guarantees our codimension assumption. Meanwhile it is an open problem whether Theorem \ref{main2} remains valid without the codimension assumption. This is even unclear in the special cases of the above operator with $n=1$ or for the operator
$$
-\frac{\partial^2}{\partial x_1^2} - \left(x_1 \frac\partial{\partial x_2} + x_2 \frac\partial{\partial x_3} + \ldots + x_{n-1} \frac\partial{\partial x_n} \right)^2 \,.
$$


\bibliographystyle{amsalpha}

\end{document}